\theoremstyle{plain}
\newtheorem{theorem}{Theorem}[section]
\newtheorem{lemma}[theorem]{Lemma}
\theoremstyle{definition}
\newtheorem{definition}[theorem]{Definition}
\numberwithin{equation}{section}
\newcommand{\dosfilas}[2]{
  \ldelim[{2}{3mm} & #1 & \rdelim]{2}{3mm} \\%
  & #2 & & & &
}
\newcommand\F{{\mathcal F}}
\newcommand\HH{{\mathcal H}}
\newcommand\Fuku{\mathcal{F}_{1,\{k_1\}}}
\newcommand\Fdkd{\mathcal{F}_{2,\{k_2\}}}
\newcommand\tFdkd{\widetilde{\mathcal{F}}_{2,\{k_2\}}}
\newcommand\CC{{\mathbb C}}
\newcommand\RR{{\mathbb R}}
\newcommand\ZZ{{\mathbb Z}}
\newcommand\NN{{\mathbb N}}
\newcommand\Id{\operatorname{Id}}
\begin{document}

\title{Admissibility condition for exceptional Laguerre polynomials}
\thanks
{Partially supported by MTM2012-36732-C03-03 and
MTM2012-36732-C03-02 (Ministerio de Econom\'ia y Competitividad),
FQM-262, FQM-4643, FQM-7276 (Junta de Andaluc\'ia) and Feder Funds (European
Union).}

\author{Antonio J. Dur\'an}
\address{Departamento de An\'alisis Matem\'atico, Universidad de Sevilla, 41080 Sevilla, Spain}
\email{duran@us.es}

\author{Mario P\'erez}
\address{Departamento de Matem\'aticas and IUMA, Universidad de Zaragoza, 50009 Zaragoza, Spain}
\email{mperez@unizar.es}

\begin{abstract}
We prove a necessary and sufficient condition for the integrability of the weight associated to the exceptional Laguerre polynomials. This condition is very much related to the fact that the associated second order differential operator has no singularities in $(0,+\infty)$. 
\end{abstract}

\keywords{Orthogonal polynomials, exceptional orthogonal polynomials, differential operators, Laguerre polynomials.}
\subjclass[2010]{42C05, 33C45, 33E30}

\maketitle
%%%%%%%%%%%%

\section{Introduction}
Exceptional orthogonal polynomials $p_n$, $n\in X\varsubsetneq \NN$, are complete orthogonal polynomial systems with respect to a positive measure which in addition
are eigenfunctions of a second order differential operator. They extend the classical families of Hermite, Laguerre and Jacobi. The most apparent difference between classical  orthogonal polynomials and their exceptional counterparts
is that the exceptional families have gaps in their degrees, in the
sense that not all degrees are present in the sequence of polynomials (as it happens with the classical families), although they form a complete orthonormal set of the underlying $L^2$ space defined by the orthogonalizing positive measure. This
means in particular that they are not covered by the hypotheses of Bochner's classification theorem (see \cite{B}) for classical orthogonal polynomials. The last few years have seen a great deal of activity in the area of exceptional orthogonal polynomials; see, for instance,
\cite{duch}, \cite{ducl}, \cite{GUKM1}, \cite{GUKM2} (where the adjective \textrm{exceptional} for this topic was introduced), \cite{GUKM3}, \cite{GUGM}, \cite{G}, \cite{GQ}, \cite{OS0}, \cite{OS3}, \cite{Qu}, \cite{STZ}, \cite{Ta} and the references therein.

One of the most interesting questions regarding exceptional polynomials is that of finding necessary and sufficient conditions so that the corresponding second order differential operator has no singularities in its domain (that is, it is regular). This question is very much related to the integrability of the weight with respect to which the exceptional polynomials are orthogonal. The purpose of this paper is to provide a complete answer to this question for exceptional Laguerre polynomials.

Exceptional Laguerre polynomials can be constructed using Wronskian type determinants whose entries are Laguerre polynomials. For a complex number $\alpha\in \hat \CC=\CC\setminus \{-1,-2,\dots \}$ we consider the Laguerre polynomials
\begin{equation}\label{deflap}
   L_n^{\alpha}(x) = \sum_{j=0}^n\frac{(-x)^j}{j!}\binom{n+\alpha}{n-j}
\end{equation}
(this and the next formulas can be found in \cite[vol.~II, pp.~188--192]{EMOT}; see also \cite[pp.~241-244]{KLS}).
They are always eigenfunctions of the second order differential operator
\[
D_{\alpha}=x\left(\frac{d}{dx}\right)^2+(\alpha+1-x)\frac{d}{dx},\quad D_{\alpha}(L_n^{\alpha})=-nL_n^{\alpha},\quad n\geq0.
\]
When $\alpha$ is real and $\alpha>-1$ they are also orthogonal with respect to the positive weight $x^\alpha e^{x}$, $x\in (0,+\infty)$. Otherwise, they are orthogonal with respect to a (signed) weight supported in a complex path.

As far as the authors know, the most general construction of exceptional Laguerre polynomials is given in \cite{ducl} (see also \cite{G}, \cite{GQ}, \cite{OS3}), and it proceeds as follows.

Denote by $\F=(F_1,F_2)$ a pair of finite sets of
positive integers, write $k_i$ for the number of elements of $F_i$,
$i=1,2$, and let $k=k_1+k_2$. The components of $\F$ can be the empty set.
We define the nonnegative integer $u_\F$ and the infinite set of nonnegative integers $\sigma_\F$, respectively, by
\begin{align*}
u_\F&=\sum_{f\in F_1}f+\sum_{f\in F_2}f-\binom{k_1+1}{2}-\binom{k_2}{2},
\\
\sigma_\F&=\{u_\F,u_\F+1,u_\F+2,\dots \}\setminus \{u_\F+f,f\in F_1\}.
\end{align*}
Notice that $\sigma_\F$ is formed by the nonnegative integers of the form $n+u_\F$ with $n\not \in F_1$. The infinite set $\sigma_\F$ will be the set of indices for the exceptional Laguerre polynomials associated to $\F$.

For each pair $\F=(F_1,F_2)$ of finite sets of positive integers and a complex number $\alpha\in \hat \CC$, we define the polynomials
\begin{equation}\label{deflaxi}
L_n^{\alpha ;\F}(x)= \left|
  \begin{array}{@{}c@{}cccc@{}c@{}}
    & L_{n-u_\F}^{\alpha}(x)&(L_{n-u_\F}^{\alpha})'(x)&\cdots &(L_{n-u_\F}^{\alpha})^{(k)}(x) & \\
    \dosfilas{ L_{f}^{\alpha}(x) & (L_{f}^{\alpha})'(x) &\cdots & (L_{f}^{\alpha})^{(k)}(x) }{f\in F_1} \\
    \dosfilas{ L_{f}^{\alpha}(-x) & L_{f}^{\alpha+1}(-x) & \cdots & L_{f}^{\alpha +k}(-x) }{f\in F_2}
  \end{array}
  \right|,
\end{equation}
for $n\in \sigma_\F$. Along this paper, we use the following notation:
given a finite set of positive integers $F=\{f_1,\ldots , f_m\}$, the expression
\[
  \begin{bmatrix}
   z_{f,1} & z_{f,2} &\cdots & z_{f,m}\\
   f\in F
  \end{bmatrix}
\]
inside a matrix or a determinant will mean the submatrix defined by
\[
   \begin{pmatrix}
   z_{f_1,1} & z_{f_1,2} &\cdots & z_{f_1,m}\\
   \vdots &\vdots &\ddots &\vdots \\
   z_{f_m,1} & z_{f_m,2} &\cdots & z_{f_m,m}
   \end{pmatrix} .
\]
The determinant~\eqref{deflaxi} should be understood in this form.

Notice that if both components of $\F$ are the empty set, we get $\sigma _\F=\NN$ and the exceptional Laguerre polynomials $L_n^{\alpha ;\F}$, $n\in\sigma_\F$, reduce to the Laguerre polynomials.

Write $\Omega_{\F}^{\alpha}(x)$ for the polynomial defined by
\[
\Omega_{\F}^{\alpha}(x)=\left|
  \begin{array}{@{}c@{}cccc@{}c@{}}
    \dosfilas{ L_{f}^{\alpha}(x) & (L_{f}^{\alpha})'(x) &\cdots & (L_{f}^{\alpha})^{(k-1)}(x) }{f\in F_1} \\
    \dosfilas{ L_{f}^{\alpha}(-x) & L_{f}^{\alpha+1}(-x) & \cdots & L_{f}^{\alpha +k-1}(-x) }{f\in F_2}
  \end{array}
  \right| .
\]
In \cite{ducl}, one of us has proved (see Theorem 5.2) that the polynomials $(L_n^{\alpha ;\F})_{n\in\sigma_\F}$ are eigenfunctions of the following second order differential operator:
\begin{equation}\label{sodolax}
D_{\F}=x\partial^2+h_1(x)\partial+h_0(x),
\end{equation}
where $\partial=d/dx$ and
\begin{align*}
h_1(x)&=\alpha +k+1-x-2x\frac{(\Omega_\F^\alpha)'(x)}{\Omega_\F^\alpha(x)},
\\
h_0(x)&=-k_1-u_\F +(x-\alpha -k)\frac{(\Omega_\F^\alpha)'(x)}{\Omega_\F^\alpha(x)}+x\frac{(\Omega_\F^\alpha)''(x)}{\Omega_\F^\alpha(x)}.
\end{align*}
More precisely $D_\F(L_n^{\alpha;\F})=-nL_n^{\alpha;\F}(x)$. Actually, only real numbers $\alpha$ were considered in \cite{ducl}, but since $D_\F(L_n^{\alpha;\F})$ and $nL_n^{\alpha;\F}(x)$ are analytic functions in $\hat \CC$, the result holds also in $\hat \CC$.

Exceptional Laguerre polynomials are formally orthogonal with respect to the weight
\begin{equation}\label{exlw}
\omega_{\alpha;F} =\frac{x^{\alpha +k}e^{-x}}{\Omega_{\F}^{\alpha}(x)^2},\quad x>0 .
\end{equation}
Hence, for these exceptional Laguerre polynomials the regularity of the associated second order differential operator in $(0,+\infty)$, and the existence of a positive measure with respect to which they are orthogonal are related to the fact that $\Omega_\F^\alpha (x)\not =0$, $x\ge 0$.
This problem gave rise in \cite{ducl} to the concept of admissibility for a real number $c$ and the pair $\F$.

\begin{definition} Let $\F=(F_1,F_2)$ be a pair of finite sets of positive integers. For a real number $c\not=0,-1,-2,\dots$, write
$\hat c=\max \{-[c],0\}$, where $[c]$ denotes the integer part of $c$ (i.e. $[c]=\max\{s\in \ZZ: s\le c\}$). We say that $c$ and $\F$ are admissible if for all $n\in \NN $
\begin{equation}\label{defadmi}
\frac{\prod_{f\in F_1}(n-f)\prod_{f\in F_2}(n+c+f)}{(n+c)_{\hat c}}\ge 0.
\end{equation}
\end{definition}
As usual $(a)_j$ will denote the Pochhammer symbol defined by
\[
(a)_0=1,\quad \quad (a)_j=a(a+1)\dots (a+j-1),\quad \mbox{for $j\ge 1$, $a\in \CC$}.
\]
When $\alpha +1$ and $\F$ are admissible, it was proved in \cite{ducl} that $\alpha +k>-1$ and the determinant $\Omega_F^\alpha $
does not vanish in $[0,+\infty)$ (and hence, the weight~\eqref{exlw} is integrable in $(0,+\infty)$). It was also conjectured that the converse is also true.

The purpose of this paper is to prove this conjecture:

\begin{theorem}\label{t1i} Let $\alpha$ be a real number with $\alpha \not =-1,-2,\dots$ If $\alpha +k>-1$ and $\Omega_{\F}^{\alpha}(x)\not =0$, $x\geq 0$, then $\alpha+1$ and $\F$ are admissible.
\end{theorem}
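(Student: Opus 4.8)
The plan is to reduce the admissibility inequality \eqref{defadmi} to a sign-counting statement and then to recover that statement from the sign of $\Omega_{\F}^{\alpha}$ on $[0,+\infty)$. Fix $c=\alpha+1$. For $n\in\NN$ with $n\notin F_1$ none of the factors in \eqref{defadmi} vanishes (the $F_2$-factors $n+c+f$ and the Pochhammer factors $n+c+s$ are nonzero because $c=\alpha+1$ is never a non-positive integer), so \eqref{defadmi} holds if and only if the total number of negative factors among $\{n-f:f\in F_1\}$, $\{n+c+f:f\in F_2\}$ and $\{n+c+s:0\le s<\hat c\}$ is even; for $n\in F_1$ the left-hand side is $0$ and nothing is to be proved. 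Since all factors become positive once $n$ exceeds $\max F_1$ and $|c|$, admissibility amounts to this parity being even for each $n$ in a finite range, that is, to the absence of any ``sign violation''. The hypothesis $\alpha+k>-1$ is used to locate $\hat c$ and to guarantee integrability of \eqref{exlw} at the origin, and it enters the bookkeeping of the denominator.

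Next I would compute the two endpoint signs of $\Omega_{\F}^{\alpha}$. From $(L_f^{\alpha})^{(j)}(0)=(-1)^j\binom{f+\alpha}{f-j}$ and $L_f^{\alpha+j}(0)=\binom{f+\alpha+j}{f}$, the value $\Omega_{\F}^{\alpha}(0)$ is an explicit $k\times k$ determinant with binomial entries, which by column reductions and the confluent Vandermonde structure of such Laguerre-at-$0$ determinants I expect to factor into a product of linear forms in $\alpha$; the leading coefficient of $\Omega_{\F}^{\alpha}$, which fixes the sign as $x\to+\infty$, should reduce to a similar product. Because $\Omega_{\F}^{\alpha}$ does not vanish on $[0,+\infty)$ it is sign-definite there, so the signs of $\Omega_{\F}^{\alpha}(0)$ and of its leading coefficient must agree; comparing the two product formulas turns this into one instance of the parity condition above.

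To get all instances at once (one per $n$) I would deform in the parameter $\alpha$ on the interval $\alpha>-1-k$. Both ``$(\alpha+1,\F)$ admissible'' and ``$\Omega_{\F}^{\alpha}$ sign-definite on $[0,+\infty)$'' are locally constant in $\alpha$ and can change only across the discrete critical values where either some factor $n+c+f$ with $f\in F_2$ changes sign, namely $\alpha=-(n+f+1)$, or a real zero of $\Omega_{\F}^{\alpha}$ reaches the endpoint $x=0$, namely $\Omega_{\F}^{\alpha}(0)=0$. I would show that, as $\alpha$ decreases through such a critical value, exactly one admissibility parity flips for exactly one index $n$ while simultaneously exactly one real zero of $\Omega_{\F}^{\alpha}$ crosses $x=0$; tracking and matching both counts at a single reference value of $\alpha$ then yields, for every allowed $\alpha$, that the number of zeros of $\Omega_{\F}^{\alpha}$ in $(0,+\infty)$ equals the number of sign violations of \eqref{defadmi}. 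In particular, non-vanishing on $[0,+\infty)$ forces zero violations, which is admissibility.

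The main obstacle is precisely this matching of critical values: proving that the zeros in $\alpha$ of the product formula for $\Omega_{\F}^{\alpha}(0)$ are exactly the numbers $\alpha=-(n+f+1)$ at which \eqref{defadmi} changes sign, and — the subtler point — ruling out that a real zero of $\Omega_{\F}^{\alpha}$ enters or leaves $[0,+\infty)$ through $+\infty$ rather than through the origin. The latter requires controlling the degree and leading coefficient of $\Omega_{\F}^{\alpha}$ as $\alpha$ varies, so that no real zero escapes to or returns from infinity at a non-generic parameter, together with the large-$x$ asymptotics of the determinant. I expect the already established implication of \cite{ducl} (admissible $\Rightarrow$ non-vanishing, with $\alpha+k>-1$) to anchor the counting argument, and an induction on $k_1+k_2$ via the Darboux/Christoffel factorizations of $D_{\F}$ to reduce the endpoint-sign computations to the case of adjoining a single index to $F_1$ or $F_2$.
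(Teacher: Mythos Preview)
Your outline takes a route entirely different from the paper's, and the obstacles you flag are genuine gaps, not technicalities. The paper never computes $\Omega_{\F}^{\alpha}(0)$, never deforms in $\alpha$, and never counts zeros. Instead it evaluates, for each $n\notin F_1$, the contour integral $\int_{\Lambda_r} L_{n+u_\F}^{\alpha;\F}(z)^2\,z^{\alpha+k}e^{-z}\,\Omega_{\F}^{\alpha}(z)^{-2}\,dz$ in closed form (Lemma~\ref{l1i}), by iterating the Darboux factorizations of Lemma~\ref{lfel}: the operators $A_i^\F$ and $B_i^\F$ are adjoint to one another with respect to the weighted pairings on $\Lambda_r$ (Lemma~\ref{lm1}), so each Darboux step peels off exactly one linear factor $n-f$ or $n+\alpha+f+1$, reducing after $k$ steps to the classical Laguerre orthogonality of Lemma~\ref{lm3}. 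Under the hypotheses $\alpha+k>-1$ and $\Omega_{\F}^{\alpha}\ne 0$ on $[0,\infty)$, Lemma~\ref{lm2} collapses the same contour integral to $(e^{2\pi\alpha i}-1)\int_0^\infty L_{n+u_\F}^{\alpha;\F}(x)^2\,\omega_{\alpha;\F}(x)\,dx$, which is $(e^{2\pi\alpha i}-1)$ times a nonnegative real number. Equating the two evaluations and cancelling $e^{2\pi\alpha i}-1$ (for $\alpha\notin\ZZ$; the integer case follows by continuity) gives the admissibility inequality~\eqref{defadmi} for every $n$ at once.

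In your plan, by contrast, the heart of the argument is the unproved assertion that the number of positive real zeros of $\Omega_{\F}^{\alpha}$ equals the number of indices $n$ at which \eqref{defadmi} fails, to be established by a homotopy in $\alpha$. Two of the difficulties you name are serious: first, matching the $\alpha$-zeros of $\Omega_{\F}^{\alpha}(0)$ with the parity-flip values of \eqref{defadmi} is itself a determinantal identity you have not proved; second, controlling whether real zeros of $\Omega_{\F}^{\alpha}$ can escape through $+\infty$ requires uniform control of the degree and leading coefficient of $\Omega_{\F}^{\alpha}$ across the excluded values $\alpha\in\{-1,-2,\dots\}$, where the entries themselves degenerate. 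Moreover, comparing the signs of $\Omega_{\F}^{\alpha}(0)$ and of the leading coefficient yields a single parity constraint, not one for each $n$; your passage from that single constraint to the full family indexed by $n$ is precisely the missing step. None of this is needed in the paper's proof, because the contour computation produces the inequality for each $n$ separately and directly.
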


This theorem will be an easy consequence of the following complex orthogonality for the exceptional Laguerre polynomials (both will be proved in Section~\ref{proofs}). For $r>0$, we denote by $\Lambda_r$ the positively oriented complex path formed by the half lines
$x\pm r i$, $x\in [0,+\infty)$, and the left hand side of the circle with center at $0$ and radius $r$ (joining the two half lines). We also consider the branch of the logarithmic function $\log z$ defined in $\CC\setminus [0,+\infty)$ with $\log i=i \pi/2$. Given a complex number $a$ we then define $z^a=e^{a\log z}$, which is an analytic function of $z$ in $\CC\setminus [0,+\infty)$.

\begin{lemma}\label{l1i} Given $\alpha\in \hat \CC$ and a pair $\F=(F_1,F_2)$ of finite sets of positive integers, there exists $r>0$ such that
\begin{multline*}
   \int_{\Lambda_r} L_{n+u_\F}^{\alpha;\F}(z)L_{m+u_\F}^{\alpha;\F}(z)
   \frac{z^{\alpha +k}e^{-z}}{\Omega_\F^{\alpha}(z)^2} \, dz
   \\
   =(e^{2\pi \alpha i}-1)
   \frac{\Gamma(n+\alpha+1)\prod_{f\in F_1}(n -f)
   \prod_{f\in F_2}(n+\alpha+f+1)}
   {n!}\delta_{n,m},
\end{multline*}
for every $n,m\not \in F_1$.
\end{lemma}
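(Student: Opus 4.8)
The plan is to prove the identity in three stages: first the classical case $\F=(\emptyset,\emptyset)$, where it fixes the normalization factor $e^{2\pi\alpha i}-1$; then the vanishing for $n\neq m$ and general $\F$, via self-adjointness of $D_\F$ along $\Lambda_r$; and finally the diagonal term, which I reduce to the classical case. Throughout, $r$ is chosen larger than the modulus of every zero of the polynomial $\Omega_\F^\alpha$; since every point of $\Lambda_r$ satisfies $|z|\ge r$, this guarantees that $\Omega_\F^\alpha$ does not vanish on $\Lambda_r$ and, together with the factor $e^{-z}$ on the two half-lines, that all integrals converge absolutely. A crucial simplification is that every manipulation below takes place on this fixed contour, so the zeros of $\Omega_\F^\alpha$ (wherever they lie) are never crossed and never obstruct any integration by parts.

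For the classical case, write $\omega_\alpha(z)=z^\alpha e^{-z}$. The integrand $L_n^\alpha(z)L_m^\alpha(z)\omega_\alpha(z)$ is analytic on $\CC\setminus[0,+\infty)$, so I deform $\Lambda_r$ inward, letting $r\to0^+$. When $\Re\alpha>-1$ the contribution of the small left semicircle tends to $0$ and the two half-lines collapse onto the two sides of the cut $[0,+\infty)$, where the chosen branch makes $z^\alpha$ differ by the factor $e^{2\pi\alpha i}$. With the stated orientation this gives $(e^{2\pi\alpha i}-1)\int_0^\infty L_n^\alpha L_m^\alpha x^\alpha e^{-x}\,dx=(e^{2\pi\alpha i}-1)\frac{\Gamma(n+\alpha+1)}{n!}\delta_{n,m}$ by the classical real orthogonality. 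Both sides are entire in $\alpha$ (the contour integral over a fixed $\Lambda_r$ is entire in $\alpha$, and on the right the zeros of $e^{2\pi\alpha i}-1$ at the negative integers cancel the poles of $\Gamma$), so the formula extends to all $\alpha\in\hat\CC$ by analytic continuation. This is precisely the lemma when both components of $\F$ are empty.

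For general $\F$ and $n\neq m$ I use self-adjointness. Writing $\omega=\omega_{\alpha;F}=z^{\alpha+k}e^{-z}/\Omega_\F^\alpha(z)^2$, the explicit $h_1$ gives $x\omega'/\omega=h_1-1$, hence $(x\omega)'=h_1\omega$ and $D_\F f=\omega^{-1}(x\omega f')'+h_0 f$. Integrating $\int_{\Lambda_r}(D_\F f)\,g\,\omega\,dz$ by parts therefore produces only the boundary term $[\,x\omega f'g\,]$ evaluated at the two ends of $\Lambda_r$, both at $\Re z\to+\infty$, where $e^{-z}$ forces it to vanish; the remaining expression is symmetric in $f$ and $g$. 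Applying this to $f=L_{n+u_\F}^{\alpha;\F}$ and $g=L_{m+u_\F}^{\alpha;\F}$, the eigenvalue equation $D_\F(L_{n+u_\F}^{\alpha;\F})=-(n+u_\F)L_{n+u_\F}^{\alpha;\F}$ yields $(n-m)\int_{\Lambda_r}L_{n+u_\F}^{\alpha;\F}L_{m+u_\F}^{\alpha;\F}\omega\,dz=0$, which is the desired $\delta_{n,m}$.

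It remains to evaluate the diagonal, which I expect to be the main obstacle. Comparing with the classical case, the target is the single \emph{norm-transfer} identity $\int_{\Lambda_r}(L_{n+u_\F}^{\alpha;\F})^2\omega\,dz=\big(\prod_{f\in F_1}(n-f)\prod_{f\in F_2}(n+\alpha+f+1)\big)\int_{\Lambda_r}(L_n^\alpha)^2 z^\alpha e^{-z}\,dz$, after which the value already established finishes the proof. To prove it I would expand the determinant $L_{n+u_\F}^{\alpha;\F}$ along its first row, writing it as $\sum_{j=0}^k(-1)^j (L_n^\alpha)^{(j)}M_j$ with minors $M_j$ that do not depend on $n$ and with $M_k=\Omega_\F^\alpha$, substitute into the integral, and use repeated integration by parts along the fixed contour to transfer the derivatives off $L_n^\alpha$ onto the weight and the minors. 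The expectation is that this collapses to a classical contour integral of $(L_n^\alpha)^2$ against $z^\alpha e^{-z}$ times the displayed $n$-dependent scalar, which is the eigenvalue on $L_n^\alpha$ of the order-$2k$ operator intertwining the classical and exceptional settings. The delicate point, and where I expect the real work to lie, is the combinatorial bookkeeping that assembles the minors and the branch data into exactly $\prod_{f\in F_1}(n-f)\prod_{f\in F_2}(n+\alpha+f+1)$; as consistency checks, this factor must vanish at each $n\in F_1$ (matching the gaps in $\sigma_\F$), its $F_2$-part must reflect the shifts $\alpha\mapsto\alpha+1,\dots,\alpha+k$ in the bottom rows of the determinant, and its leading-order growth in $n$ must agree with that read off from the leading coefficient of $L_{n+u_\F}^{\alpha;\F}$.
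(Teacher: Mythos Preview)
Your first two stages are correct. The classical case is exactly the paper's Lemma~\ref{lm3} (contour collapse for $\Re\alpha>-1$, then analytic continuation), and your self-adjointness computation for $D_\F$ along $\Lambda_r$ is valid: the identity $(x\omega)'=h_1\omega$ holds, the boundary terms die from $e^{-z}$, and the off-diagonal vanishing follows.

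The genuine gap is stage three. You have not proved the norm-transfer identity; you state it as an ``expectation'' and concede that the ``combinatorial bookkeeping'' is the real work. Expanding the determinant along its first row and squaring gives a double sum $\sum_{j,j'}(-1)^{j+j'}\int (L_n^\alpha)^{(j)}(L_n^\alpha)^{(j')}M_jM_{j'}\,\omega$, and there is no mechanism in your outline that collapses this to a single classical integral times the required scalar. Integration by parts alone will not do it: the minors $M_j$ depend on $x$ in a complicated way, and after transferring derivatives you still face cross-terms involving products and derivatives of different minors against $\omega$. The ``consistency checks'' you list (vanishing at $n\in F_1$, correct leading growth) constrain the answer but do not determine it, and certainly do not substitute for a proof.

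The paper avoids this difficulty entirely by not separating the diagonal from the off-diagonal. It uses the Darboux factorization of $D_\F$ (Lemma~\ref{lfel}): there are explicit first-order operators $A_i^\F,B_i^\F$ with $A_i^\F(L_{n+u_{\F'}}^{\alpha;\F'})=L_{n+u_\F}^{\alpha;\F}$ and $B_i^\F(L_{n+u_\F}^{\alpha;\F})=c_i(n)\,L_{n+u_{\F'}}^{\alpha;\F'}$, where $\F'$ is $\F$ with one element removed and $c_1(n)=-(n-f_{k_1}^{1\rceil})$, $c_2(n)=-(\alpha+n+f_{k_2}^{2\rceil}+1)$. A one-line adjointness computation (Lemma~\ref{lm1}) shows that $A_i^\F$ and $-B_i^\F$ are adjoint with respect to the two weights $z^{\alpha+k}e^{-z}/(\Omega_\F^\alpha)^2$ and $z^{\alpha+k-1}e^{-z}/(\Omega_{\F'}^\alpha)^2$ on $\Lambda_r$. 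One application therefore gives
\[
\int_{\Lambda_r}L_{n+u_\F}^{\alpha;\F}L_{m+u_\F}^{\alpha;\F}\,\omega_\F
=-c_i(n)\int_{\Lambda_r}L_{n+u_{\F'}}^{\alpha;\F'}L_{m+u_{\F'}}^{\alpha;\F'}\,\omega_{\F'},
\]
and iterating $k$ times strips $\F$ down to $(\emptyset,\emptyset)$, producing exactly $\prod_{f\in F_1}(n-f)\prod_{f\in F_2}(n+\alpha+f+1)$ with no bookkeeping at all. Note that this requires $r$ chosen so that \emph{every} intermediate $\Omega_\HH^\alpha$ (for $\HH$ obtained by deleting subsets of $F_1,F_2$) is nonzero on $\Lambda_r$, not just $\Omega_\F^\alpha$ as in your setup. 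This inductive step \emph{is} your missing norm-transfer identity, proved one factor at a time; your stage-three sketch is effectively trying to compress $k$ first-order adjointness steps into a single order-$2k$ computation, which is why the combinatorics looked intractable.
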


We finish this paper with an appendix where the admissibility condition~\eqref{defadmi} is rewritten so that it allows an easy generation of examples of admissible real numbers $c$ and pairs $\F$ of finite sets of positive integers.

\section{Preliminaries}
From now on, $\F=(F_1,F_2)$ will denote a pair of finite sets of
positive integers. We will write $F_1=\{ f_1^{1\rceil},\dots ,
f_{k_1}^{1\rceil}\}$, $F_2=\{ f_1^{2\rceil},\dots , f_{k_2}^{2\rceil}\}$, with
$f_i^{j\rceil}<f_{i+1}^{j\rceil}$.

Hence $k_j$ is the number of elements of $F_j$,
$j=1,2$. We write $k=k_1+k_2$. The components of $\F$ can be the empty set.

For a pair $\F=(F_1,F_2)$ of positive integers we denote by
$\F_{j,\{ i\}}$, $i=1,\ldots , k_j$, $j=1,2$, the pair of finite sets of positive integers
defined by
\begin{align}
\F_{1,\{ i\}}&=(F_1\setminus \{f_i^{1\rceil}\},F_2),
\notag\\\label{deff2}
\F_{2,\{ i\}}&=(F_1,F_2\setminus \{f_i^{2\rceil}\}).
\end{align}

Darboux transformations are an important tool for constructing exceptional orthogonal polynomials.

\begin{definition}
Given a system $(T,(\phi_n)_n)$ formed by a second order differential operator $T$ and a sequence $(\phi_n)_n$ of eigenfunctions for $T$, $T(\phi_n)=\pi_n\phi_n$, by a Darboux transform of the system $(T,(\phi_n)_n)$ we
mean the following. For a real number $\lambda$, we factorize $T-\lambda \Id$ as the product of two first order differential operators $T=BA+\lambda \Id$ ($\Id$ denotes the identity operator). We then produce a new system consisting of the operator $\hat T$, obtained by reversing the order of the factors,
$\hat T = AB+\lambda \Id$, and the sequence of eigenfunctions $\hat \phi_n =A(\phi_n)$: $\hat T(\hat \phi_n)=\pi_n\hat\phi_n$.
We say that the system $(\hat T,(\hat\phi_n)_n)$ has been obtained by applying a Darboux transformation with parameter $\lambda$ to
 the system $(T,(\phi_n)_n)$.
\end{definition}

In \cite{ducl}, the second order differential operator $D_\F$~\eqref{sodolax} for the exceptional Laguerre polynomials was factorized as a product of two first order differential operators (see Lemma 5.4). As explained there, such a factorization can be done by choosing one of the components of $\F=(F_1,F_2)$ and removing one element in the chosen component (an iteration shows then that the system $(D_\F, (L_n^{\alpha; \F})_{n\in \sigma_\F})$ can be constructed by applying a sequence of $k$ Darboux transforms to the Laguerre system). This factorization will be the key to prove Lemma \ref{l1i} in the introduction of this paper.

\begin{lemma}[Lemma 5.4 of \cite{ducl}]\label{lfel} Let $\F=(F_1,F_2)$ be a pair of finite sets of positive integers.
\begin{enumerate}
\item[a)] If $F_1\not =\emptyset$, we define the first order differential operators $A_{1}^{\F}$, $B_{1}^{\F}$ as
\begin{align}
   \label{opea1}
   A_1^\F &=-\frac{\Omega_\F^\alpha(x)}{\Omega_{\Fuku}^\alpha(x)}\partial
   +\frac{(\Omega_\F^\alpha)'(x)}{\Omega_{\Fuku(x)}^\alpha},
   \\
   \label{opeb1}
   B_1^\F &=\frac{-x\Omega_{\Fuku}^\alpha(x)}{\Omega_{\F}^\alpha(x)}\partial
   +\frac{x(\Omega_{\Fuku}^\alpha)'(x)+(x-\alpha-k)\Omega_{\Fuku}^\alpha(x)}
   {\Omega_{\F}^\alpha(x)},
\end{align}
where $k_1$ is the number of elements of $F_1$ and $\Fuku$ is defined by~\eqref{deff2}. Then for $n\not \in F_1$
\begin{align*}
   A_1^\F(L_{n+u_{\Fuku}}^{\alpha; \Fuku})(x)
   &= L_{n+u_\F}^{\alpha ;\F}(x),
   \\
   B_1^\F(L_{n+u_\F}^{\alpha ;\F})(x)
   &= -(n-f_{k_1}^{1\rceil}) L_{n+u_{\Fuku}}^{\alpha; \Fuku}(x).
\end{align*}
Moreover,
\begin{align*}
   D_{\Fuku} &=B_1^\F A_1^\F-(f_{k_1}^{1\rceil}+u_{\Fuku})\Id,
   \\
   D_{\F} &=A_1^\F B_1^\F-(f_{k_1}^{1\rceil}+u_\F)\Id.
\end{align*}
\item[b)] If $F_2\not =\emptyset$, we define the first order differential operators $A_{2}^{\F}$, $B_{2}^{\F}$ as
\begin{align}\label{opea2}
   A_2^\F &=-\frac{\Omega_\F^\alpha(x)}{\Omega_{\Fdkd}^\alpha(x)}\partial
   +\frac{(\Omega_\F^\alpha)'(x)+\Omega_\F^\alpha(x)}{\Omega_{\Fdkd}^\alpha(x)},
   \\
   \label{opeb2}
   B_2^\F &=\frac{-x\Omega_{\Fdkd}^\alpha(x)}{\Omega_{\F}^\alpha(x)}\partial
   +\frac{x(\Omega_{\Fdkd}^\alpha)'(x)-(\alpha+k)\Omega_{\Fdkd}^\alpha(x)}
   {\Omega_{\F}^\alpha(x)},
\end{align}
where $k_2$ is the number of elements of $F_2$ and $\Fdkd$ is defined by~\eqref{deff2}. Then for $n\not \in F_1$
\begin{align*}
   A_2^\F(L_{n+u_{\Fdkd}}^{\alpha; \Fdkd})(x)
   &= L_{n+u_\F}^{\alpha ;\F}(x),
   \\
   B_2^\F(L_{n+u_\F}^{\alpha ;\F})(x)
   &= -(\alpha+n+f_{k_2}^{2\rceil}+1) L_{n+u_{\Fdkd}}^{\alpha; \Fdkd}(x).
\end{align*}
Moreover,
\begin{align*}
   D_{\Fdkd}
   &=B_2^\F A_2^\F+(\alpha+f_{k_2}^{2\rceil}-u_{\Fdkd}+1) \Id,
   \\
   D_{\F} &=A_2^\F B_2^\F+(\alpha +f_{k_2}^{2\rceil}-u_\F+1) \Id.
\end{align*}
\end{enumerate}
\end{lemma}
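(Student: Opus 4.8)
The plan is to regard $A_1^\F$ and $B_1^\F$ as first order (Darboux) operators attached to the ratio $\Omega_\F^\alpha/\Omega_{\Fuku}^\alpha$, to prove the two intertwining relations of part a) as determinant identities, and then to read off the two factorizations by a formal bookkeeping of eigenvalues. Writing $W(g,h)=gh'-g'h$, formulas \eqref{opea1} and \eqref{opeb1} become
\[
   A_1^\F(h)=-\frac{W(\Omega_\F^\alpha,h)}{\Omega_{\Fuku}^\alpha},
   \qquad
   B_1^\F(h)=\frac{-x\,W(\Omega_{\Fuku}^\alpha,h)+(x-\alpha-k)\,\Omega_{\Fuku}^\alpha h}{\Omega_\F^\alpha}.
\]
Composing these, a direct calculation shows that $A_1^\F B_1^\F$ and $B_1^\F A_1^\F$ have leading coefficient $x$ and first order coefficients $\alpha+k+1-x-2x(\Omega_\F^\alpha)'/\Omega_\F^\alpha$ and $\alpha+k-x-2x(\Omega_{\Fuku}^\alpha)'/\Omega_{\Fuku}^\alpha$ respectively, the ``other'' determinant cancelling in each case. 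As $\Fuku$ has $k-1$ elements, these are exactly the first order coefficients of $D_\F$ and $D_{\Fuku}$ in \eqref{sodolax}, so $A_1^\F B_1^\F-D_\F$ and $B_1^\F A_1^\F-D_{\Fuku}$ are multiplication operators; it then only remains to identify their constants.

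The heart of the matter is the pair of intertwining identities $A_1^\F(L_{n+u_{\Fuku}}^{\alpha;\Fuku})=L_{n+u_\F}^{\alpha;\F}$ and $B_1^\F(L_{n+u_\F}^{\alpha;\F})=-(n-f_{k_1}^{1\rceil})L_{n+u_{\Fuku}}^{\alpha;\Fuku}$; through the Darboux form above these are bilinear relations among $\Omega_\F^\alpha$, $\Omega_{\Fuku}^\alpha$, $L_{n+u_\F}^{\alpha;\F}$ and $L_{n+u_{\Fuku}}^{\alpha;\Fuku}$. The key observation is that all four are minors of the single $(k+1)\times(k+1)$ array $M$ defining $L_{n+u_\F}^{\alpha;\F}$ in \eqref{deflaxi}: deleting the index row and the last column gives $\Omega_\F^\alpha$, deleting the last $F_1$-row and the last column gives $L_{n+u_{\Fuku}}^{\alpha;\Fuku}$, and deleting both these rows together with the last two columns gives $\Omega_{\Fuku}^\alpha$. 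I would obtain the identities by applying the Desnanot--Jacobi (Sylvester) identity to $M$ along the index row, the last $F_1$-row and the last two columns, and re-expressing the two resulting auxiliary minors through the derivatives $(\Omega_\F^\alpha)'$ and $(L_{n+u_{\Fuku}}^{\alpha;\Fuku})'$ by means of the Laguerre rules $(L_f^\alpha)^{(j)}=(-1)^jL_{f-j}^{\alpha+j}$ (from \eqref{deflap}) and $\partial_xL_f^{\alpha+j}(-x)=L_{f-1}^{\alpha+j+1}(-x)$; the constant $-(n-f_{k_1}^{1\rceil})$ is then read off by comparing leading coefficients.

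With both identities in hand the factorizations are immediate. The polynomials $L_{n+u_\F}^{\alpha;\F}$ and $L_{n+u_{\Fuku}}^{\alpha;\Fuku}$ are eigenfunctions of $D_\F$ and $D_{\Fuku}$ with eigenvalues $-(n+u_\F)$ and $-(n+u_{\Fuku})$, and $u_\F-u_{\Fuku}=f_{k_1}^{1\rceil}-k_1$. Evaluating the multiplication operator $B_1^\F A_1^\F-D_{\Fuku}$ on $L_{n+u_{\Fuku}}^{\alpha;\Fuku}$: the right factor raises it to $L_{n+u_\F}^{\alpha;\F}$, the left factor lowers it back with factor $-(n-f_{k_1}^{1\rceil})$, so the operator multiplies by $-(n-f_{k_1}^{1\rceil})+(n+u_{\Fuku})=f_{k_1}^{1\rceil}+u_{\Fuku}$; since $L_{n+u_{\Fuku}}^{\alpha;\Fuku}$ is a nonzero polynomial this forces the constant, giving $D_{\Fuku}=B_1^\F A_1^\F-(f_{k_1}^{1\rceil}+u_{\Fuku})\Id$. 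Evaluating $A_1^\F B_1^\F-D_\F$ on $L_{n+u_\F}^{\alpha;\F}$ in the same way gives $D_\F=A_1^\F B_1^\F-(f_{k_1}^{1\rceil}+u_\F)\Id$.

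Part b) runs along identical lines with $\Fdkd$, $A_2^\F$, $B_2^\F$ in place of $\Fuku$, $A_1^\F$, $B_1^\F$, the eigenvalue arithmetic now producing the constants $-(\alpha+n+f_{k_2}^{2\rceil}+1)$ and $\alpha+f_{k_2}^{2\rceil}-u_\F+1$. The main obstacle, already present in a), is the re-expression of the auxiliary Sylvester minors as derivatives. For the index and $F_1$-rows differentiation acts as a pure column shift, but for the $F_2$-rows it does not: since $\partial_xL_f^{\alpha+j}(-x)=L_{f-1}^{\alpha+j+1}(-x)$ while the neighbouring column is $L_f^{\alpha+j+1}(-x)$, the contiguity relation $L_f^{\alpha+j+1}(-x)=L_f^{\alpha+j}(-x)+L_{f-1}^{\alpha+j+1}(-x)$ shows that the parameter shift on these rows is the operator $1+\partial_x$ rather than $\partial_x$. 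Thus $M$ is a mixed Wronskian, differentiating its minors produces correction terms supported on the $F_2$-rows, and the technical core of the proof is to show that these corrections cancel in the Wronskian combinations, so that the Sylvester relation collapses to the two clean identities.
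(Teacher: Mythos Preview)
The paper does not prove this lemma: it is quoted verbatim as Lemma~5.4 of \cite{ducl}, with the sole remark that \cite{ducl} only treats the case of $F_2$ (part~b)) and that part~a) ``can be obtained in a completely similar way''. So there is no proof in the present paper against which to compare your proposal.

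Your plan is a reasonable sketch of the standard route to such identities. The rewriting of $A_1^\F$, $B_1^\F$ in Wronskian form is correct, the identification of all four quantities $\Omega_\F^\alpha$, $\Omega_{\Fuku}^\alpha$, $L_{n+u_\F}^{\alpha;\F}$, $L_{n+u_{\Fuku}}^{\alpha;\Fuku}$ as minors of the single array \eqref{deflaxi} is exactly right, and Desnanot--Jacobi applied to that array is indeed the natural source of the bilinear relations. You have also correctly located the one genuine difficulty: the $F_2$-rows shift under $1+\partial_x$ rather than $\partial_x$, so the Sylvester minors obtained by deleting the last column are not literally derivatives of the smaller determinants, and one must check that the resulting correction terms cancel. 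This is where the real work sits, and your proposal stops short of carrying it out; as written it is an outline rather than a proof. One small simplification: once both intertwining identities are established you do not need to verify separately that $A_1^\F B_1^\F-D_\F$ and $B_1^\F A_1^\F-D_{\Fuku}$ have vanishing second- and first-order parts. Since $(B_1^\F A_1^\F-D_{\Fuku})\,L_{n+u_{\Fuku}}^{\alpha;\Fuku}=(f_{k_1}^{1\rceil}+u_{\Fuku})\,L_{n+u_{\Fuku}}^{\alpha;\Fuku}$ holds for infinitely many $n$ of unbounded degree, a second-order operator with rational coefficients acting as a fixed scalar on such a family must be that scalar.
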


In \cite{ducl}, only the case of $F_2$ is considered (i.e., the operators~\eqref{opea2} and~\eqref{opeb2}), but the result for $F_1$ can be obtained in a completely similar way.

\section{Proofs of Theorem \ref{t1i} and Lemma \ref{l1i}}
\label{proofs}
Both results are a consequence of the following three lemmas. Let $P$ and $Q$ be two polynomials. We associate to them the following four first order differential operators:
\begin{align}\label{ope1}
A_1&=\frac{-P}{Q}\frac{d}{dx}+\frac{P'}{Q},\quad &B_1&=\frac{-xQ}{P}\frac{d}{dx}+\frac{xQ'+(x-\alpha) Q}{P},\\\label{ope2}
A_2&=\frac{-P}{Q}\frac{d}{dx}+\frac{P'+P}{Q},\quad &B_2&=\frac{-xQ}{P}\frac{d}{dx}+\frac{xQ'-\alpha Q}{P}.
\end{align}

\begin{lemma}\label{lm1} Assume that $\alpha \in \hat \CC$ and the polynomials $P$ and $Q$ do not vanish in $\Lambda_r$. If $p$ and $q$ are polynomials, then
\[
\int_{\Lambda_r} p(z)(A_i(q(z))\frac{z^\alpha e^{-z}}{P(z)^2}\, dz=
-\int_{\Lambda_r} B_i(p(z))q(z)\frac{z^{\alpha-1} e^{-z}}{Q(z)^2}\, dz,\quad i=1,2.
\]
\end{lemma}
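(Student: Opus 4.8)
The plan is to prove the identity for $i=1$ directly by integration by parts along $\Lambda_r$, and then to deduce the case $i=2$ from it by exploiting elementary relations between the two pairs of operators. Throughout, the hypothesis that $P$ and $Q$ do not vanish on $\Lambda_r$ guarantees that all the rational factors appearing below are smooth along the path, so that the contour integration by parts is legitimate.

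First I would record the algebraic identity $P'q-Pq'=-P^2(q/P)'$, valid wherever $P\neq0$. Since $A_1(q)=(P'q-Pq')/Q$, this rewrites the left-hand integrand as
\[
p\,A_1(q)\,\frac{z^\alpha e^{-z}}{P^2}=-\frac{p\,z^\alpha e^{-z}}{Q}\left(\frac{q}{P}\right)'.
\]
I would then integrate by parts with $u=p\,z^\alpha e^{-z}/Q$ and $dv=(q/P)'\,dz$. The main point to settle is that the boundary term vanishes. The contour $\Lambda_r$ is unbounded, with its two endpoints lying at $+\infty\pm ri$; on the two horizontal half-lines one has $z=x\pm ri$ with $x\to+\infty$, so $e^{-z}=e^{-x}e^{\mp ri}$ decays exponentially while $p$, $q$, $z^\alpha$ grow only polynomially and $1/P$, $1/Q$ stay bounded (as $P$, $Q$ do not vanish on $\Lambda_r$). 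Hence the product $\frac{p\,z^\alpha e^{-z}}{Q}\cdot\frac{q}{P}$ tends to $0$ at both ends, and the semicircular arc joining the half-lines is an interior portion of the path and contributes no boundary term. After discarding the boundary term, the left-hand side becomes $\int_{\Lambda_r}\left(\frac{p\,z^\alpha e^{-z}}{Q}\right)'\frac{q}{P}\,dz$.

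What remains for $i=1$ is to compute this derivative and match it with $-B_1(p)$. Using $(z^\alpha e^{-z})'=z^{\alpha-1}e^{-z}(\alpha-z)$ and factoring out $z^{\alpha-1}e^{-z}/Q^2$, the integrand becomes
\[
\frac{z^{\alpha-1}e^{-z}}{P\,Q^2}\bigl[\,z p'Q-z p Q'+\alpha pQ-z pQ\,\bigr]\,q .
\]
On the other hand, expanding $B_1(p)=\bigl(-zQp'+(zQ'+(z-\alpha)Q)p\bigr)/P$ shows that $-B_1(p)\,q\,z^{\alpha-1}e^{-z}/Q^2$ has exactly the same bracket $zQp'-z pQ'-z pQ+\alpha pQ$; this is a routine term-by-term identification, and it proves the case $i=1$.

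Finally, for $i=2$ I would avoid repeating the computation by using the operator relations $A_2=A_1+P/Q$ and $B_2=B_1-zQ/P$ (as multiplication operators), which follow at once from the definitions~\eqref{ope1}--\eqref{ope2}. Applying the case $i=1$ identity to $A_1,B_1$, the extra multiplication terms contribute $\int_{\Lambda_r} p\,\frac{P}{Q}q\,\frac{z^\alpha e^{-z}}{P^2}\,dz$ on the left and $\int_{\Lambda_r}\frac{zQ}{P}p\,q\,\frac{z^{\alpha-1}e^{-z}}{Q^2}\,dz$ on the right. The key cancellation is that $z\cdot z^{\alpha-1}=z^\alpha$, so both corrections equal the same integral $\int_{\Lambda_r} pq\,z^\alpha e^{-z}/(PQ)\,dz$; they cancel against each other, and the identity for $i=2$ follows. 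The only genuine obstacle is the justification that the boundary term vanishes, which rests entirely on the exponential decay of $e^{-z}$ at the two infinite ends of $\Lambda_r$.
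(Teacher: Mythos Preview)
Your proof is correct and follows essentially the same route as the paper: a single integration by parts along $\Lambda_r$, the boundary term killed by the exponential decay of $e^{-z}$ at the two infinite ends, and then the second case deduced from the first via the elementary relations $A_2=A_1+P/Q$, $B_2=B_1-zQ/P$. The only cosmetic difference is that the paper chooses $u=\dfrac{p\,z^\alpha e^{-z}}{PQ}$, $v=q$ (so it lands directly on the $i=2$ identity and then derives $i=1$), whereas you take $u=\dfrac{p\,z^\alpha e^{-z}}{Q}$, $v=q/P$ and do $i=1$ first; the algebra and the justification of the vanishing boundary term are the same.
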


\begin{proof}
Integrating by parts gives
\begin{multline*}
   \int_{\Lambda_r} - \frac{p(z) z^\alpha e^{-z}}{P(z) Q(z)} q'(z) \, dz
   \\
   = \left[ - \frac{p(z) z^\alpha e^{-z}}{P(z) Q(z)} q(z)
   \right]_{z\to +\infty - i r}^{z \to +\infty+ir}
   + \int_{\Lambda_r}
   \left(\frac{p(z) z^\alpha e^{-z}}{P(z) Q(z)}\right)' q(z) \, dz,
\end{multline*}
which reduces to
\[
   \int_{\Lambda_r}
   \left(\frac{p(z) z^\alpha e^{-z}}{P(z) Q(z)}\right)' q(z) \, dz,
\]
due to the factor $e^{-z}$. Differentiating this quotient like a product rather than like a quotient yields
\begin{align*}
   \int_{\Lambda_r} -\frac{p(z) z^\alpha e^{-z}}{P(z) Q(z)} q'(z) \, dz
   &= \int_{\Lambda_r} \frac{p'(z) z^\alpha e^{-z}}{P(z) Q(z)} q(z) \, dz
   + \int_{\Lambda_r}
   \frac{p(z) \alpha z^{\alpha-1} e^{-z}}{P(z) Q(z)} q(z) \, dz
   \\
   &\qquad- \int_{\Lambda_r}
   \frac{p(z) z^\alpha e^{-z} (P(z) + P'(z))}{P(z)^2 Q(z)} q(z) \, dz
   \\
   &\qquad- \int_{\Lambda_r} \frac{p(z) z^\alpha e^{-z} Q'(z)}{P(z)Q(z)^2} q(z) \, dz,
\end{align*}
that is,
\begin{multline*}
   \int_{\Lambda_r} p(z) \left(
   - \frac{P(z) q'(z)}{Q(z)} + \frac{P(z) + P'(z)}{Q(z)} q(z)
   \right) \frac{z^\alpha e^{-z}}{P(z)^2} \, dz
   \\
   = \int_{\Lambda_r}
   \left(
   \frac{z Q(z) p'(z)}{P(z)} + \frac{\alpha p(z) Q(z)}{P(z)}
   - \frac{p(z) z Q'(z)}{P(z)}
   \right)
    q(z) \frac{z^{\alpha-1}e^{-z}}{Q(z)^2} \, dz.
\end{multline*}
This proves the lemma for $i=2$. The case $i=1$ follows easily from the equalities $A_1 = A_2 - \frac{P}{Q}$ and $B_1 = B_2 + \frac{x Q}{P}$.
\end{proof}

\begin{lemma}\label{lm2} Assume that $\alpha$ is a real number with $\alpha>-1$. Assume also that the polynomial $P$ does not vanish neither in $\Lambda_r$ nor in its interior. If $p$ is a polynomial, then
\[
   \int_{\Lambda_r} p(z)\frac{z^\alpha e^{-z}}{P(z)^2} \, dz
   = (e^{2\pi \alpha i}-1)
   \int_0^\infty p(x)\frac{x^\alpha e^{-x}}{P(x)^2}\, dx.
\]
\end{lemma}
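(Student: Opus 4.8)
The plan is to exploit the fact that the only obstruction to analyticity of the integrand $f(z)=p(z)z^{\alpha}e^{-z}/P(z)^2$ inside $\Lambda_r$ is the branch cut of $z^\alpha$ along $[0,+\infty)$. By hypothesis $P$ does not vanish in $\Lambda_r$ nor in its interior, and $z^\alpha e^{-z}$ is analytic on $\CC\setminus[0,+\infty)$; hence $f$ is analytic in the interior of $\Lambda_r$ with the sole exception of the segment $[0,+\infty)$. I would therefore deform $\Lambda_r$ onto this cut and pick up the jump of $z^\alpha$ across it.

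Concretely, for small $\varepsilon>0$ I would introduce the keyhole-type contour $\gamma_\varepsilon$ consisting of the upper edge $x+i\varepsilon$ run from $x=+\infty$ down to $x=0$, a small circular arc of radius $\varepsilon$ about the origin, and the lower edge $x-i\varepsilon$ run from $x=0$ out to $x=+\infty$. Since $f$ is analytic in the region lying between $\Lambda_r$ and $\gamma_\varepsilon$ (it avoids both the zeros of $P$ and the cut), Cauchy's theorem gives $\int_{\Lambda_r}f=\int_{\gamma_\varepsilon}f$; the deformation near $+\infty$ is legitimate because a closing vertical segment at $\mathrm{Re}\,z=R$ contributes at most $Ce^{-R}\to0$ thanks to the factor $e^{-z}$, so the tails match.

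Next I would let $\varepsilon\to0$. On the arc one has $|z^\alpha|=\varepsilon^\alpha$ while $p$, $e^{-z}$ and $1/P^2$ stay bounded (note $P(0)\neq0$, since $0$ lies in the interior of $\Lambda_r$), so this piece is $O(\varepsilon^{\alpha+1})$ and vanishes in the limit; this is exactly where the hypothesis $\alpha>-1$ enters. For the two edges I would use the chosen branch, for which $\arg z\to0^+$ above the cut and $\arg z\to2\pi^-$ below it: thus $z^\alpha\to x^\alpha$ on the upper edge and $z^\alpha\to x^\alpha e^{2\pi\alpha i}$ on the lower edge. Taking orientation into account (the upper edge is traversed from $+\infty$ to $0$, the lower from $0$ to $+\infty$) gives
\[
\int_{\Lambda_r}f=-\int_0^\infty p(x)\frac{x^\alpha e^{-x}}{P(x)^2}\,dx+e^{2\pi\alpha i}\int_0^\infty p(x)\frac{x^\alpha e^{-x}}{P(x)^2}\,dx,
\]
which is the asserted identity.

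The routine points are standard; the main care is in the bookkeeping of the orientation of the two edges, so that the signs combine into $e^{2\pi\alpha i}-1$ rather than $1-e^{2\pi\alpha i}$, and in justifying the contour deformation on the noncompact tail, where one relies on the exponential decay of $e^{-z}$ to discard the far vertical segment.
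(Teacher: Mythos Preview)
Your argument is correct and follows essentially the same route as the paper's proof: a keyhole/Hankel contour deformation using Cauchy's theorem, with the small arc discarded via $\alpha>-1$, the jump of $z^\alpha$ across the cut producing the factor $e^{2\pi\alpha i}-1$, and the tail at infinity controlled by $e^{-z}$. The only difference is presentational: the paper first truncates to $\Re z\le R$, builds a genuinely closed finite contour $\Gamma_{r,R,\varepsilon}$, lets $\varepsilon\to 0$, and then sends $R\to\infty$, whereas you sketch the unbounded deformation in one step and note that the closing vertical segment contributes $O(e^{-R})$.
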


\begin{proof}
Let us cut the path $\Lambda_r$ this way: fix some $R > 0$ and consider the path $\Lambda_{r,R} = \Lambda_r \cap \{\Re z \leq R\}$, positively oriented like $\Lambda_r$.
Let us now take $0 < \varepsilon < r$ and consider the horizontal segments
\begin{align*}
   [i\varepsilon, R + i\varepsilon],
   \\
   [R-i\varepsilon, - i\varepsilon],
\end{align*}
and the vertical segments
\begin{align*}
   [R-ir, R - i\varepsilon],
   \\
   [R + i \varepsilon, R + ir],
\end{align*}
where each segment $[z,w]$ is taken from $z$ to $w$. Finally, consider the semicircle $\gamma_\varepsilon = \{|z|=\varepsilon,\ \Re z \le 0\}$, negatively oriented. Then, the path
\[
   \Gamma_{r,R,\varepsilon} = \Lambda_{r,R} \cup [R-ir, R - i\varepsilon]
   \cup [R-i\varepsilon, - i\varepsilon]
   \cup \gamma_\varepsilon
   \cup [i\varepsilon, R + i\varepsilon]
   \cup [R + i \varepsilon, R + ir]
\]
is closed and the function $p(z)\frac{z^\alpha e^{-z}}{P^2(z)}$ is holomorphic in  an open, simply connected set containing the path and its interior. By Cauchy's theorem,
\[
   \int_{\Gamma_{r,R,\varepsilon}} p(z)\frac{z^\alpha e^{-z}}{P(z)^2} \, dz = 0.
\]
Now let us take $\varepsilon \to 0^+$ and analyse each path separately. On the semicircle $\gamma_\varepsilon$ we have
\[
   \operatorname{length}(\gamma_\varepsilon) \sup_{z \in \gamma_\varepsilon}
   \left| p(z)\frac{z^\alpha e^{-z}}{P(z)^2} \right|
   \leq C \varepsilon^{1+\alpha},
\]
for some constant $C$ not depending on $\varepsilon$, so that
\[
   \lim_{\varepsilon \to 0^+} \int_{\gamma_\varepsilon}
   p(z)\frac{z^\alpha e^{-z}}{P(z)^2} \, dz = 0.
\]
The limit on the two vertical segments reduces to the integral on the vertical segment $[R-ir, R+ir]$. The limits on the intervals $[i\varepsilon, R + i\varepsilon]$ and $[R-i\varepsilon, - i\varepsilon]$ are
\[
   \lim_{\varepsilon \to 0^+}
   \int_{[i\varepsilon, R + i\varepsilon]}
   p(z)\frac{z^\alpha e^{-z}}{P(z)^2} \, dz
   =\int_0^R p(x)\frac{x^\alpha e^{-x}}{P(x)^2} \, dx
\]
and
\[
   \lim_{\varepsilon \to 0^+}
   \int_{[R-i\varepsilon, - i\varepsilon]}
   p(z)\frac{z^\alpha e^{-z}}{P(z)^2} \, dz
   = - e^{2\pi \alpha i}
   \int_0^R p(x)\frac{x^\alpha e^{-x}}{P(x)^2} \, dx,
\]
due to the branch of the logarithm we have chosen. In both cases, the dominate convergence theorem applies. This proves that
\[
   \int_{\Lambda_{r,R}} p(z)\frac{z^\alpha e^{-z}}{P(z)^2} \, dz
   = (e^{2\pi \alpha i}-1) \int_0^R p(x)\frac{x^\alpha e^{-x}}{P(x)^2} \, dx
   - \int_{[R-ir, R+ir]} p(z)\frac{z^\alpha e^{-z}}{P(z)^2} \, dz.
\]
Taking limit as $R \to +\infty$ proves the lemma, since
\[
   \lim_{R \to \infty}
   R \sup_{z \in [R-ir, R+ir]} \left| p(z)\frac{z^\alpha e^{-z}}{P(z)^2} \right|
   = 0.
   \qedhere
\]
\end{proof}

\begin{lemma}\label{lm3} If $\alpha \in \hat \CC$, then
\begin{equation}
\label{eqlm3}
   \int_{\Lambda_r} L_n^\alpha(z)L_m^\alpha(z)z^\alpha e^{-z}\, dz
   =(e^{2\pi \alpha i}-1)\frac{\Gamma(n+\alpha +1)}{n!}\delta_{n,m}
\end{equation}
for every nonnegative integers $m,n$.
\end{lemma}

\begin{proof}
Let us fix two nonnegative integers $m,n$. It is rather elementary to deduce from~\eqref{deflap} that the left hand side in~\eqref{eqlm3} is
an entire function of $\alpha$, while the right hand side is analytic in $\alpha \in \hat{\CC}$ (though it can be extended to an entire function, as well).
Thus, proving~\eqref{eqlm3} for $\alpha \in (-1,+\infty)$ will be enough. Now, for $\alpha \in (-1,+\infty)$ this follows from Lemma~\ref{lm2} and the well-known classical orthogonality relation
\[
   \int_0^\infty L_n^\alpha(x)L_m^\alpha(x)x^\alpha e^{-x}\, dx
   = \frac{\Gamma(n+\alpha +1)}{n!}\delta_{n,m}.
   \qedhere
\]
\end{proof}

We are now ready to prove Lemma \ref{l1i} and Theorem \ref{t1i}.

\begin{proof}[Proof of Lemma \ref{l1i}]
Consider the family of finite sets of positive integers $\Psi=\{\HH =(H_1,H_2): H_1\subset F_1,H_2\subset F_2\}$. Since the number of elements of $\Psi$ is finite, we can choose a positive number $r>0$, such that the polynomial $\Omega_\HH^{\alpha}$ does not vanish in the complex path $\Lambda_r$ if $\HH\in \Psi$.

Assume first that $F_1\not=\emptyset$, and write $P=\Omega_\F^{\alpha}$ and $Q=\Omega_{\Fuku}^{\alpha}$.

It is then easy to check that the operators $A_1$ and $B_1$ defined by~\eqref{ope1}, with $\alpha+k$ instead of $\alpha$, coincide with the operators $A_1^\F$ and $B_1^\F$ defined by~\eqref{opea1} and~\eqref{opeb1}. By writing $p_n=L_{n+u_\F}^{\alpha ;\F}$ and $q_n=L_{n+u_{\Fuku}}^{\alpha; \Fuku}$, Lemma \ref{lfel} gives
\[
   A_1^\F(q_n) = p_n,
   \qquad
   B_1^\F(p_n) = -(n-f_{k_1}^{1\rceil}) q_n.
\]
Hence, using Lemma \ref{lm1}, we get
\begin{align*}
\int_{\Lambda_r} L_{n+u_\F}^{\alpha;\F}(z) &L_{m+u_\F}^{\alpha;\F}(z)
\frac{z^{\alpha +k}e^{-z}}{\Omega_\F^{\alpha}(z)^2} \, dz
=\int_{\Lambda_r} p_n(z)p_m(z)\frac{z^{\alpha +k} e^{-z}}{P(z)^2} \, dz
\\
&=\int_{\Lambda_r} p_n(z)A_1^\F(q_m)(z)\frac{z^{\alpha +k} e^{-z}}{P(z)^2} \, dz
\\
&=-\int_{\Lambda_r} B_1^\F(p_n)(z)q_m(z)\frac{z^{\alpha +k-1} e^{-z}}{Q(z)^2} \, dz
\\
&=(n-f_{k_1}^{1\rceil}) \int_{\Lambda_r} q_n(z)q_m(z)\frac{z^{\alpha +k-1} e^{-z}}{Q(z)^2} \, dz
\\
&=(n-f_{k_1}^{1\rceil}) \int_{\Lambda_r}L_{n+u_{\Fuku}}^{\alpha;\Fuku}(z)L_{m+u_{\Fuku}}^{\alpha;\Fuku}(z)\frac{z^{\alpha +k-1}e^{-z}}{\Omega_{\Fuku}^{\alpha}(z)^2} \, dz.
\end{align*}
Repeating the process, we have
\begin{multline*}
\int_{\Lambda_r}L_{n+u_\F}^{\alpha;\F}(z)L_{m+u_\F}^{\alpha;\F}(z)\frac{z^{\alpha +k}e^{-z}}{\Omega_F^{\alpha}(z)^2} \, dz
\\
=\prod_{f\in F_1}(n-f)\int_{\Lambda_r}L_{n+u_{\widetilde \F}}^{\alpha;\widetilde \F}(z)L_{m+u_{\widetilde \F}}^{\alpha;\widetilde \F}(z)\frac{z^{\alpha +k-k_1}e^{-z}}{\Omega_{\widetilde \F}^{\alpha}(z)^2}\, dz,
\end{multline*}
where $\widetilde \F=(\emptyset, F_2)$.

We now proceed in the same way with $F_2$. We hence write $P=\Omega_{\widetilde \F}^{\alpha}$ and $Q=\Omega_{\tFdkd}^{\alpha}$.
The operators $A_2$ and $B_2$ defined by~\eqref{ope2}, with $\alpha+k-k_1$ instead of $\alpha$, coincide with the operators $A_2^{\widetilde \F}$ and $B_2^{\widetilde \F}$ defined by~\eqref{opea2} and~\eqref{opeb2}. By writing $p_n=L_{n+u_{\widetilde \F}}^{\alpha ;\widetilde \F}$ and $q_n=L_{n+u_{\tFdkd}}^{\alpha; \tFdkd}$, Lemma \ref{lfel} gives
\[
   A_2^{\widetilde \F}(q_n) = p_n,
   \qquad
   B_2^{\widetilde \F}(p_n) = -(\alpha +1+n+f_{k_2}^{2\rceil}) q_n.
\]
Hence, using Lemma \ref{lm1}, we get
\begin{multline*}
\int_{\Lambda_r}L_{n+u_{\widetilde \F}}^{\alpha;\widetilde \F}(z)L_{m+u_{\widetilde \F}}^{\alpha;\widetilde \F}(z)\frac{z^{\alpha +k-k_1}e^{-z}}{\Omega_{\widetilde \F}^{\alpha}(z)^2} \, dz
\\
=(\alpha+1+n+f_{k_2}^{2\rceil})\int_{\Lambda_r} L_{n+u_{\tFdkd}}^{\alpha;\tFdkd}(z)L_{m+u_{\tFdkd}}^{\alpha;\tFdkd}(z)\frac{z^{\alpha +k-k_1-1}e^{-z}}{\Omega_{\tFdkd}^{\alpha}(z)^2} \, dz.
\end{multline*}
Since the exceptional Laguerre polynomials associated to the pair $(\emptyset,\emptyset)$ are the Laguerre polynomials, repeating the process we have
\begin{multline*}
\int_{\Lambda_r}L_{n+u_{\widetilde \F}}^{\alpha;\widetilde \F}(z)L_{m+u_{\widetilde \F}}^{\alpha;\widetilde \F}(z)\frac{z^{\alpha +k-k_1}e^{-z}}{\Omega_{\widetilde \F}^{\alpha}(z)^2} \, dz
\\
=\prod_{f\in F_2}(\alpha+1+n+f)\int_{\Lambda_r}L_{n}^{\alpha}(z)L_{m}^{\alpha}(z)z^{\alpha }e^{-z} \, dz.
\end{multline*}
Lemma \ref{lm3} finally gives
\begin{multline*}
   \int_{\Lambda_r} L_{n+u_\F}^{\alpha;\F}(z)L_{m+u_\F}^{\alpha;\F}(z)
   \frac{z^{\alpha +k}e^{-z}}{\Omega_F^{\alpha}(z)^2} \, dz
   \\
   = (e^{2\pi \alpha i}-1)
   \frac{\Gamma(n+\alpha+1) \prod_{f\in F_1}(n -f)
   \prod_{f\in F_2}(n+\alpha+f+1)}{n!} \delta_{n,m}.
   \qedhere
\end{multline*}
\end{proof}

\begin{proof}[Proof of Theorem \ref{t1i}]
Take $r$ as in the proof of Lemma \ref{l1i}. We then have, for $n\not \in F_1$,
\begin{multline*}
   \int_{\Lambda_r} L_{n+u_\F}^{\alpha;\F}(z)^2
   \frac{z^{\alpha +k}e^{-z}}{\Omega_{\F}^{\alpha}(z)^2} \, dz
   \\
   = (e^{2\pi \alpha i}-1)
   \frac{\Gamma(n+\alpha+1) \prod_{f\in F_1}(n -f)
   \prod_{f\in F_2}(n+\alpha+f+1)}{n!}.
\end{multline*}
On the other hand, since $\alpha+k>-1$ and $\Omega_{\F}^{\alpha}(x)\not =0$ for $x\ge 0$, we can also assume in the choice of $r$ that $\Omega_{\F}^{\alpha}$ does not vanish in the interior of $\Lambda_r$. Then, Lemma \ref{lm2} gives
\[
   \int_{\Lambda_r} L_{n+u_\F}^{\alpha;\F}(z)^2
   \frac{z^{\alpha +k}e^{-z}}{\Omega_{\F}^{\alpha}(z)^2} \, dz
   = (e^{2\pi \alpha i}-1)
   \int_0^\infty L_{n+u_\F}^{\alpha;\F}(x)^2
   \frac{x^{\alpha +k}e^{-x}}{\Omega_{\F}^{\alpha}(x)^2} \, dx.
\]
Hence, for $n\not \in F_1$ and $\alpha \not \in\ZZ$,  we get
\begin{multline*}
   \frac{\Gamma(n+\alpha+1)\prod_{f\in F_1}(n -f)
   \prod_{f\in F_2}(n+\alpha+f+1)}{n!}
   \\
   = \int_0^\infty L_{n+u_\F}^{\alpha;\F}(x)^2
   \frac{x^{\alpha +k}e^{-x}}{\Omega_{\F}^{\alpha}(x)^2} \, dx
   \ge 0.
\end{multline*}
The admissibility condition~\eqref{defadmi} for $\alpha+1$ and $\F$ follows now easily.

If $\alpha\in \ZZ$, the theorem follows using an argument of continuity.
\end{proof}

\section{Appendix: Describing admissible pairs  $(c,\F)$}
We start this appendix by recalling that for exceptional Hermite polynomials, the admissibility of a finite set $F$ of positive integers is defined by (see \cite{duch}, \cite{GUGM})
\begin{equation}\label{aH}
\prod_{f\in F}(n-f)\ge 0, \quad n\in \NN.
\end{equation}
This concept of admissibility is easier than the one defined in~\eqref{defadmi} because of two reasons. On one hand, we have now a single finite set $F$ instead of a pair $\F$ of finite sets. On the other hand, the Hermite admissibility only depends on the finite set $F$ while Laguerre admissibility also depends on the parameter $\alpha$ of the Laguerre polynomials. Hermite admissibility~\eqref{aH} can be characterized easily: the maximal segments of $F$ have an even number of elements. More precisely, split up the set $F$, $F=\bigcup _{i=1}^KY_i$, in such a way that $Y_i\cap Y_j=\emptyset $, $i\not =j$, the elements of each $Y_i$ are consecutive integers and $1+\max (Y_i)<\min Y_{i+1}$, $i=1,\dots, K-1$ ($Y_i$, $i=1,\dots, K$, are called the maximal segments of $F$). Then $F$ satisfies~\eqref{aH} if and only if each $Y_i$, $i=1,\dots, K$, has an even number of elements. This characterization allows an easy generation of Hermite admissible sets $F$.

The purpose of this appendix is to find a similar characterization for real numbers $c$ and pairs $\F=(F_1,F_2)$ of positive integers satisfying the Laguerre admissibility defined by~\eqref{defadmi}.

Hence, consider a pair of finite sets of positive integers $\F = (F_1, F_2)$ and a real number $c \in \RR \setminus \{0,-1,-2,\dots\}$. The pair $(c,\F)$ is admissible, see~\eqref{defadmi}, if
\begin{equation}
\label{def}
   \frac{\prod_{f\in F_1} (n-f)
   \prod_{f\in F_2} (n+c+f)}
   {(n+c)_{\hat{c}}}\geq 0,
   \quad \forall n \in \NN ,
\end{equation}
where $\hat{c} = \max\{-[c],0\}$.

In case $c \geq 0$, condition~\eqref{def} reduces to the Hermite admissibility (see \eqref{aH}) for the set $F_1$,
\[
   \prod_{f\in F_1} (n-f) \geq 0
   \quad \forall n \in \NN .
\]
Let us assume now that $c < 0$. Then $\hat{c} = -[c]$ and~\eqref{def} becomes
\[
   \prod_{f\in F_1} (n-f)
   \prod_{f\in F_2} (n+c+f)
   \prod_{0\leq m < -[c]} (n+c+m) \geq 0
   \quad \forall n \in \NN .
\]
The terms in the second and third products are never zero if $c \notin \{0,-1,-2,\dots\}$. Now let us observe that, in the second product, those terms with $c + f > 0$ can obviuously be omitted. And those terms with $c + f < 0$ (or, equivalently, $[c]+f < 0$) are present also in the third product, so they can be omitted in both products. Therefore, the admissibility condition is equivalent to
\[
   \prod_{f\in F_1} (n-f)
   \prod_{\substack{0\leq m < -[c]\\ m \notin F_2}} (n+c+m) \geq 0
   \quad \forall n \in \NN .
\]
In other words:
\begin{equation}
\label{admisG}
   \prod_{f\in G} (n-f) \geq 0
   \quad \forall n \in \NN ,
\end{equation}
where $G = F_1 \cup \{-c-m;\ m \in \{0,1,\dots,-[c]-1\} \setminus F_2 \}$. Now  this condition can be expressed in terms of the set $G$ as follows. Let
\[
   \mathcal{S} = \NN 
   \cup \{-c-m;\ m \in \{0,1,\dots,-[c]-1\} \setminus F_2 \}.
\]
With the natural order, each element in $\mathcal{S}$ has a next element, and each one other than~$0$ has a previous one. A subset of $\mathcal{S}$ formed by consecutive elements can be called a segment. Any subset of $\mathcal{S}$ can be uniquely expressed as the union of maximal segments. Thus, condition~\eqref{admisG} holds if and only if these maximal segments have an even number of elements.

This characterization allows an easy generation of examples of admissible numbers $c$ and pairs $\F$.
For instance, take $c = -17/4$ and $\F=(F_1,F_2)$, with $F_1 = \{1,2,8,9\}$ and $F_2 = \{1,2\}$. Then,
\begin{align*}
   \mathcal{S} &= \{0,\frac{1}{4}, 1, \frac{5}{4}, 2, 3, 4, \frac{17}{4},
   5, 6, 7, \dots \},
   \\
   G &= \{\frac{1}{4}, 1, \frac{5}{4}, 2, \frac{17}{4}, 8, 9\},
\end{align*}
and the maximal segments in $G$ are $\{\frac{1}{4}, 1, \frac{5}{4}, 2\}$, $\{\frac{17}{4}\}$, and $\{8,9\}$. Since one of these segments  has an odd number of elements, the pair $(c,\F)$ is not admissible. From this example, we can however find many admissible pairs $(c,\F)$. Indeed, with the same choices for $c$ and $F_2$, take $F_1 = \{1,2,5,8,9\}$, the maximal segments in $G$ become $\{\frac{1}{4}, 1, \frac{5}{4}, 2\}$, $\{\frac{17}{4},5\}$, and $\{8,9\}$, so the pair $(c,\F)$ is admissible. Similarly, we can see that for $F_1 = \{1,2,4,8,9\}$, the pair $(c,\F)$ is admissible as well.

\end{document}